\newtheorem{theorem}{Theorem}[section]
\newtheorem{lemma}[theorem]{Lemma}
\newtheorem{corollary}[theorem]{Corollary}
\newtheorem{proposition}[theorem]{Proposition}
\newenvironment{proofprop}{\vspace{1ex}\noindent{\sl Proof of the proposition.}
\hspace{0.5em}}{\hfill\qed\vspace{1ex}}
\def\GL{\mathrm{GL}}
\def\Mat{\mathrm{Mat}}
\def\C{\mathbf{C}}
\def\Z{\mathbf{Z}}
\def\F{\mathbf{F}}
\def\Q{\mathbf{Q}}
\def\C{\mathbf{C}}
\def\gl{\mathfrak{gl}}
\begin{document}
\centerline{}

\title{The center of pure complex braid groups}
\author[F.~Digne]{Fran\c cois Digne}
\address{LAMFA, Universit\'e de Picardie-Jules Verne}
\email{digne@u-picardie.fr}
\author[I.~Marin]{Ivan Marin}
\address{IMJ, Universit\'e Paris VII}
\email{marin@math.jussieu.fr}
\author[J.~Michel]{Jean Michel}
\address{IMJ, Universit\'e Paris VII}
\email{jmichel@math.jussieu.fr}
\subjclass[2010]{Primary 20F36; Secondary 20F55}
\medskip

\begin{abstract}
Brou\'e, Malle and Rouquier conjectured in \cite{BMR} that
the center of the pure braid group of an irreducible finite complex reflection
group is cyclic. We prove this conjecture, for the remaining exceptional
types, using the analogous result for
the full braid group due to Bessis, and we actually prove the
stronger statement that any finite index subgroup of such braid group
has cyclic center.
\end{abstract}

\maketitle

\section{Introduction}
%\label{}
% etc, etc

Let $W$ denote a (finite) complex reflection group of rank $n \geq 1$, that is
a  finite  subgroup  of  $\GL_n(\C)$  generated by pseudo-reflections, that is
elements  of  $\GL_n(\C)$  which  fix  an  hyperplane.  We  assume  $W$  to be
irreducible, in order to simplify statements.
To such a group
is associated an hyperplane arrangement $\mathcal{A}$, made of the
collection of the fixed hyperplanes associated to the pseudo-reflections
in $W$. Let $X = \C^n \setminus \bigcup \mathcal{A}$ denote the hyperplane
complement. The groups $P = \pi_1(X)$ and $B = \pi_1(X/W)$
are known as the pure braid group and braid group associated to $W$.
There is a short exact sequence $1 \to P \to B \to W \to 1$. In the
case $W$ is a real reflection group, the group $B$
is an Artin-Tits group, with prototype the usual braid group $B$ associated
to $W = \mathfrak{S}_n$. An extension of many results on Artin-Tits groups
to the more general setup introduced here has been proposed in \cite{BMR}.
Several of them are still conjectural, such as
%One important aspect is 
the
determination of the center of these groups. The goal of this
note is to clarify the status of this question.
%situation of the center. 
Recall that in \cite{BMR} are
introduced (infinite order) elements $\beta \in Z(B)$, $\pi \in Z(P)$,
with $\beta^{|Z(W)|} = \pi$ such that the image of $\beta$ in $W$ 
generates $Z(W)$.

Our purpose is to summarize what can be stated on this topic,
as follows:

\begin{theorem} \label{thzb} The center $Z(B)$ is infinite cyclic, and
generated by $\mathbf{\beta}$.
\end{theorem}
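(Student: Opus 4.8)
The plan is to establish the easy inclusion first and then reduce the hard inclusion to a statement with no centrality hypothesis. Since $Z(W)$ consists of scalar matrices, which commute both with all of $W$ and with the scalar $\C^{*}$-action on $X$, the loop $\beta$ obtained from multiplication by a generator of the group of $|Z(W)|$-th roots of unity is central in $B$, as recorded before the theorem. Moreover $\beta^{|Z(W)|}=\pi$, and $\pi$ has infinite order: the principal $\C^{*}$-bundle $X\to\mathbf{P}(X)$ yields a central extension $1\to\langle\pi\rangle\to P\to\pi_{1}(\mathbf{P}(X))\to1$ whose kernel is the infinite cyclic $\pi_{1}(\C^{*})$, using that $\mathbf{P}(X)$ is aspherical. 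Hence $\langle\beta\rangle\cong\Z$ and $\langle\beta\rangle\subseteq Z(B)$.

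For the reverse inclusion I would pass to the quotient $\bar B:=B/\langle\beta\rangle$. A one-line argument shows that if $Z(\bar B)=1$ then any $z\in Z(B)$ maps to a central, hence trivial, element of $\bar B$, so $z\in\langle\beta\rangle$; thus Theorem~\ref{thzb} is equivalent to the centrelessness of $\bar B$. The same device gives the stronger assertion in the abstract: for finite-index $\Gamma\le B$ the intersection $\Gamma\cap\langle\beta\rangle$ is infinite cyclic and central in $\Gamma$, while $\Gamma/(\Gamma\cap\langle\beta\rangle)$ has finite index in $\bar B$, so if every finite-index subgroup of $\bar B$ has trivial centre then $Z(\Gamma)\subseteq\Gamma\cap\langle\beta\rangle\cong\Z$ is cyclic. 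The reduction is useful because $\bar B$ is geometric: from $\langle\beta\rangle\cap P=\langle\pi\rangle$ and $P/\langle\pi\rangle\cong\pi_{1}(\mathbf{P}(X))$ one obtains a short exact sequence $1\to\pi_{1}(\mathbf{P}(X))\to\bar B\to W/Z(W)\to1$, realising $\bar B$ as the fundamental group of the aspherical projective quotient $\mathbf{P}(X)/(W/Z(W))$ by the $K(\pi,1)$ theorem.

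The whole difficulty is then concentrated in proving that $\bar B$, and each of its finite-index subgroups, has trivial centre, and I expect this to be the main obstacle: the space $\mathbf{P}(X)/(W/Z(W))$ carries no negative curvature from which centrelessness would be automatic, so a nontrivial virtually central element must be excluded directly. Here I would invoke Bessis's Garside structure on $B$, in which $\beta$ appears as a periodic element (a root of the central full twist) and the centre of the ambient Garside monoid is generated by the full twist; the translation-number and normal-form calculus for Garside groups then forces an element centralising a finite-index family to be a power of $\beta$, which is exactly the input ``the analogous result for the full braid group due to Bessis''.

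Finally I would organise the verification by type. For Coxeter types the centre of $B$ is classical, and for most complex reflection groups the relevant Garside data are already available, so the genuinely new content is the check for the remaining exceptional types; there I would feed Bessis's explicit periodic elements into the Garside centre computation and confirm, case by case, that $Z(\bar B)=1$ persists on finite-index subgroups, thereby completing the proof of Theorem~\ref{thzb}.
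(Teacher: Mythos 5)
Your first two paragraphs (centrality and infinite order of $\beta$, reduction of the theorem to $Z(B/\langle\beta\rangle)=1$) are fine, and your overall strategy for the bulk of the cases --- quote Bessis's Garside-theoretic computation of $Z(B)$ --- is exactly what the paper does for the well-generated exceptional groups $G_{24}$, $G_{25}$, $G_{26}$, $G_{27}$, $G_{29}$, $G_{32}$, $G_{33}$, $G_{34}$. The genuine gap is $G_{31}$. Your plan is to ``feed Bessis's explicit periodic elements into the Garside centre computation'' uniformly for the remaining exceptional types, but $G_{31}$ is not well generated: it has no known Garside monoid of the kind your translation-number/normal-form argument requires (Bessis only equips it with a Garside \emph{groupoid}, and his center theorem does not cover it; the paper itself must exclude $G_{31}$ from every Garside-based corollary in Section 2). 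So the one case in which the theorem was actually open is precisely the case your argument does not reach.

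The paper closes this case by a completely different mechanism (due to Bessis, reproduced in Section 3): Springer theory realises $G_{31}$ as the centraliser of a regular element $c$ of order $4$ in $G_{37}=W(E_8)$, and $B_{31}$ as the centraliser in $B_{37}$ of a lift $\tilde c$ with $\tilde c^4=\pi$. One then produces a $24$-regular element $d$ with lift $\tilde d$ satisfying $\tilde d^{24}=\pi$ and $\tilde d^6=\tilde c$, so $\tilde d\in B_{31}$ and $Z(B_{31})$ lies in the centraliser of $\tilde d$; since $24$ divides exactly one degree of $E_8$, that centraliser is the cyclic group $\langle\tilde d\rangle$, and comparing images in $G_{37}$ forces $Z(B_{31})=\langle\tilde d^6\rangle=\langle\beta_{31}\rangle$. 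Some argument of this kind (or the Krammer-representation computation the paper uses later for the finite-index statement) is needed to replace your Garside step for $G_{31}$; without it the proof is incomplete.
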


\begin{theorem} \label{thzp}The center $Z(P)$ is infinite cyclic, and
generated by $\mathbf{\pi}$.
\end{theorem}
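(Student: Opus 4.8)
The plan is to obtain Theorem \ref{thzp} from Theorem \ref{thzb} by establishing the stronger assertion promised in the abstract: \emph{every finite-index subgroup $H$ of $B$ has infinite cyclic center}. Since $P$ is normal of index $|W|$ in $B$, this covers $H=P$. Throughout I would use that $B$ is torsion-free: as $X$ is the complement of \emph{all} reflecting hyperplanes, no nontrivial element of $W$ fixes a point of $X$, so $W$ acts freely and $X/W$ is an aspherical manifold (a $K(\pi,1)$), whence $B=\pi_1(X/W)$ has no torsion. I would also record the elementary fact $\langle\beta\rangle\cap P=\langle\pi\rangle$: an element $\beta^k$ lies in $P$ exactly when its image in $W$ is trivial, i.e.\ when $|Z(W)|$ divides $k$, and $\beta^{|Z(W)|}=\pi$.

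The mechanism I would set up is that the center of a finite-index subgroup is trapped by the finite-conjugacy-class center of $B$. Write $\mathrm{FC}(B)$ for the set of $g\in B$ whose conjugacy class is finite, equivalently whose centralizer $C_B(g)$ has finite index; it is a characteristic subgroup. If $H\le B$ has finite index and $z\in Z(H)$, then $C_B(z)\supseteq H$ has finite index, so $z\in\mathrm{FC}(B)$; thus $Z(H)\subseteq\mathrm{FC}(B)$. On the other hand $H\cap Z(B)=H\cap\langle\beta\rangle$ has finite index in $\langle\beta\rangle\cong\Z$, hence is nontrivial infinite cyclic, and it visibly lies in $Z(H)$. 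Consequently the entire theorem reduces to the single identity $\mathrm{FC}(B)=Z(B)$: granting it, $Z(H)$ is squeezed between a nontrivial subgroup of $\Z$ and $\langle\beta\rangle\cong\Z$, so $Z(H)$ is infinite cyclic; and for $H=P$ one gets $Z(P)\subseteq\langle\beta\rangle\cap P=\langle\pi\rangle$, while $\pi\in Z(P)$ by construction, forcing $Z(P)=\langle\pi\rangle$.

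It remains to prove $\mathrm{FC}(B)=Z(B)$, the inclusion $\supseteq$ being trivial. Here I would pass to the quotient $\bar B:=B/Z(B)=B/\langle\beta\rangle$. The image of a finite conjugacy class is a finite conjugacy class, so the image of $\mathrm{FC}(B)$ lands inside $\mathrm{FC}(\bar B)$; hence it suffices to show $\mathrm{FC}(\bar B)=1$. As a guide to the shape of this last step, note that $\mathrm{FC}(B)$ is a normal FC-subgroup of the torsion-free group $B$; by the classical fact that the derived subgroup of an FC-group is torsion, $\mathrm{FC}(B)$ is abelian. Thus what must be excluded is precisely a nontrivial abelian normal subgroup of $\bar B$ carrying only finite conjugacy classes.

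The hard part is exactly this statement $\mathrm{FC}(\bar B)=1$, which genuinely uses the geometry rather than Theorem \ref{thzb} alone. My approach would be to realize $\bar B$ geometrically: the central arrangement yields a free $\C^\times$-action on $X$ commuting with $W$, hence a free action on the manifold $X/W$, whose orbit loop represents $\beta$; this gives the fibration $\C^\times\to X/W\to (X/W)/\C^\times$ and the exact sequence $1\to\langle\beta\rangle\to B\to\pi_1\bigl((X/W)/\C^\times\bigr)\to1$, so by Theorem \ref{thzb} one has $\bar B=\pi_1(M)$ with $M=(X/W)/\C^\times$ an aspherical quasi-projective manifold (the quotient of the projectivized complement). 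I would then show that $\pi_1(M)$ admits no nontrivial element with finite-index centralizer, equivalently no nontrivial abelian normal subgroup, expecting this to follow from an irreducibility/indecomposability property of $M$. For the remaining exceptional types I anticipate that a uniform soft argument may not be available, and that one must instead invoke Bessis's structural description of these braid groups — concretely, the classification of periodic elements (roots of powers of $\beta$) together with the fact that every non-periodic element has an infinite conjugacy class — possibly refined by a case-by-case inspection. This is where the real work lies; once $\mathrm{FC}(\bar B)=1$ is secured, the chain of reductions above delivers both Theorem \ref{thzp} and the finite-index strengthening.
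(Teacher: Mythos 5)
Your reduction scaffolding is correct and in fact mirrors the paper's own strategy: the identity $\mathrm{FC}(B)=Z(B)$ you aim for is exactly equivalent to the paper's Theorem \ref{thff} (if $U\le B$ has finite index and $z\in Z(U)$ then $C_B(z)\supseteq U$ has finite index, and conversely any $z$ with finite-index centralizer is central in the finite-index subgroup $C_B(z)$), and the deduction of Theorem \ref{thzp} from it via $Z(P)\subseteq Z(B)\cap P=\langle\beta\rangle\cap P=\langle\pi\rangle$ together with $\pi\in Z(P)$ is precisely how the paper concludes. But the entire mathematical content of the result lies in proving that identity, and your proposal does not prove it: you pass to $\bar B=B/\langle\beta\rangle$, observe that it suffices to show $\mathrm{FC}(\bar B)=1$, and then offer only a program (``expecting this to follow from an irreducibility/indecomposability property of $M$'', ``I anticipate that a uniform soft argument may not be available'', ``This is where the real work lies''). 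The asserted dichotomy that every non-periodic element of $B$ has infinite conjugacy class is not a known soft fact and is not established; nothing in the proposal rules out a nontrivial abelian normal subgroup of $\bar B$ with finite conjugacy classes. So there is a genuine gap at the one step that matters.

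For comparison, the paper's proof of Theorem \ref{thff} is entirely different in nature and is where all the work goes. For every irreducible $W$ except $G_{31}$ it exhibits a Garside monoid for $B$ satisfying three combinatorial conditions (an additive length function, balanced lcm's of pairs of atoms, and $\gcd(\Delta_{s,t},s^n)=s$), proves that $r^jb=bz$ forces $z=t^j$ with $rb=bt$ (Proposition \ref{rb=bt}), and deduces that any subgroup containing a nontrivial power of each atom --- in particular any finite-index subgroup --- has its center inside $Z(B)$ (Corollary \ref{center included}). For $G_{31}$, where no such monoid is available, it embeds $B_{31}$ in $B_{37}$ via Springer theory, uses the generalized Krammer representation split into two $60$-dimensional faithful pieces, and verifies by a mod-$37$ computer calculation that the images of the squares of the generators generate the full matrix algebra, so that anything commuting with a finite-index subgroup commutes with all of $B_{31}$. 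Neither of these mechanisms (Garside combinatorics, linearity plus density of the image) appears in your proposal, and without some substitute for them the statement $\mathrm{FC}(\bar B)=1$ remains unproved.
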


\begin{theorem} \label{thse}There exists a short exact sequence
$$
1 \to Z(P) \to Z(B) \to Z(W) \to 1
$$
\end{theorem}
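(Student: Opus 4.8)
The plan is to construct the short exact sequence from the given data. We already have, from the introduction, the central elements $\beta \in Z(B)$ and $\pi \in Z(P)$ with $\beta^{|Z(W)|} = \pi$, together with the short exact sequence $1 \to P \to B \to W \to 1$. Theorems~\ref{thzb} and~\ref{thzp} tell us that $Z(B) = \langle \beta \rangle$ and $Z(P) = \langle \pi \rangle$, both infinite cyclic. So the task is to assemble these into the claimed sequence for the centers. First I would observe that any central element of $B$ is in particular central in $P$ (since $P$ is a subgroup of $B$, conjugation by an element of $P$ fixes anything central in $B$), giving $Z(B) \cap P \subseteq Z(P)$. In fact I would verify $Z(B) \cap P = Z(P)$: the reverse inclusion needs that $\pi$, which is central in $P$, is also central in $B$, which is exactly the relation $\pi \in Z(P)$ asserted in the excerpt together with the fact that $\beta \in Z(B)$ and $\pi = \beta^{|Z(W)|}$ shows $\pi \in Z(B)$.

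With this identification in hand, I would restrict the surjection $B \twoheadrightarrow W$ to $Z(B)$. The kernel of this restriction is $Z(B) \cap P = Z(P)$, so we immediately obtain the left-exact portion
$$
1 \to Z(P) \to Z(B) \to W.
$$
It remains to identify the image of $Z(B)$ in $W$ and show it equals $Z(W)$, which establishes surjectivity onto $Z(W)$ and yields the full short exact sequence. One inclusion is general: the image in $W$ of any central element of $B$ must be central in $W$, because the map $B \to W$ is surjective, so a central element maps to an element commuting with all of $W$; hence $\mathrm{image}(Z(B)) \subseteq Z(W)$. For the reverse inclusion I would use the explicit element $\beta$: by hypothesis the image of $\beta$ in $W$ generates $Z(W)$, and since $\beta \in Z(B)$, the image of the whole group $Z(B)$ already contains a generator of $Z(W)$, forcing $\mathrm{image}(Z(B)) = Z(W)$.

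The main obstacle, and really the only nonroutine point, is confirming that the kernel of the restricted map $Z(B) \to W$ is precisely $Z(P)$ rather than something smaller; concretely this is the equality $Z(B) \cap P = Z(P)$. The inclusion $Z(B) \cap P \subseteq Z(P)$ is automatic, but the reverse requires knowing that the generator $\pi$ of $Z(P)$ is genuinely central in the larger group $B$, not merely in $P$. Here I would lean on the relation $\pi = \beta^{|Z(W)|}$ from the excerpt: since $\beta \in Z(B)$ any power of it is central in $B$, so $\pi \in Z(B) \cap P$, and as $Z(P) = \langle \pi \rangle$ by Theorem~\ref{thzp} we get $Z(P) \subseteq Z(B) \cap P$. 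Once this equality is secured the three maps splice together into the asserted exact sequence, and exactness at each term follows directly from the two computations of kernel and image above.
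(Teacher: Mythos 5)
Your argument is correct and is exactly the ``general arguments'' the paper invokes: the paper states that Theorem~\ref{thse} reduces to Theorem~\ref{thzp}, more precisely to the inclusion $Z(P)\subset Z(B)$, together with the fact that the image of $\beta$ generates $Z(W)$, and you have simply written these steps out in full (kernel of $Z(B)\to W$ equals $Z(B)\cap P=Z(P)$ via $\pi=\beta^{|Z(W)|}\in Z(B)$ and $Z(P)=\langle\pi\rangle$; image equals $Z(W)$ since it contains the image of $\beta$). No gaps; same approach as the paper, just made explicit.
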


These  three results were conjectured in  \cite{BMR}, and proved there for the
infinite  series $G(de,e,n)$, as well as for the (easy) case of groups of rank
2.  Actually, for  a given  group $W$,  theorem \ref{thzp}  implies by general
arguments  theorem  \ref{thse}  and  theorem  \ref{thzb},  as every element in
$Z(W)$  can be lifted  to an element  in $Z(B)$. The  remaining cases were the
exceptional  groups $G_{24}$,  $G_{25}$, $G_{26}$,  $G_{27}$, $G_{29}$, $G_{31}$, $G_{32}$,
$G_{33}$,  $G_{34}$ in  Shephard-Todd notation.  As noted  in \cite{BMR}, theorem
\ref{thzb}  for the  so-called ``Shephard''  groups $G_{25},  G_{26}, G_{32}$ is
true  because they have the same $B$  as some Coxeter group (however, contrary
to  what is claimed in  \cite{BMR}, this does not  prove the other two results
!).  Note also that in \cite[proposition 2.23]{BMR} the bottom-right square in
the diagram is not commutative and the bottom sequence is not exact.

Theorem \ref{thzb} was proved by D. Bessis in \cite{BESSIS}
for the well-generated groups $G_{24}$, $G_{25}$, $G_{26}$, $G_{27}$,
$G_{29}$,  $G_{32}$, $G_{33}$, $G_{34}$. The remaining case of $G_{31}$ can be
obtained by a short argument due to D. Bessis (personal communication)
that we reproduce here for the convenience of the reader (see section 3).

Once theorem \ref{thzb} is known, theorem \ref{thse} reduces
to theorem \ref{thzp}, and more precisely to the statement $Z(P) \subset Z(B)$.
Actually we prove the following stronger theorem in the subsequent sections.

\medskip

\begin{theorem} \label{thff} If $U$ is a finite index subgroup of $B$,
then $Z(U) \subset Z(B)$.
\end{theorem}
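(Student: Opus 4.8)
The plan is to recast the statement in terms of conjugacy classes, then use the scaling $\C^*$-action to reduce to a statement about the projectivized arrangement, and finally close the argument with the theory of periodic (regular) elements. Write $FC(B)$ for the set of $g \in B$ with finite conjugacy class, equivalently with $C_B(g)$ of finite index in $B$; this is a characteristic subgroup containing $Z(B)$. I would first observe that Theorem \ref{thff} is equivalent to the equality $FC(B) = Z(B)$. Indeed, if $z \in Z(U)$ with $U$ of finite index, then $C_B(z) \supseteq U$ has finite index, so $z \in FC(B)$; conversely, any $g \in FC(B)$ lies in $Z(C_B(g))$ with $C_B(g)$ of finite index. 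Since $Z(B) \subseteq FC(B)$ always holds, the whole content is the inclusion $FC(B) \subseteq Z(B) = \langle \beta \rangle$.

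Next I would reduce to showing that every $g \in FC(B)$ is periodic, i.e. that some power of $g$ lies in $\langle \beta \rangle$. Recall that $X$ is a $K(P,1)$ (the $K(\pi,1)$ property, established by Bessis \cite{BESSIS}) and that $W$ acts freely on $X$ by Steinberg's theorem, so $X/W$ is an aspherical manifold, $B$ and $P$ are torsion-free, and $\mathrm{cd}\, P = \mathrm{cd}\, B = n$. The scaling action of $\C^*$ on $X$ yields the projectivized complement $M$, with $\pi_1(M) = \overline{P} := P/\langle \pi \rangle$ a torsion-free duality group of dimension $n-1$. Given $g \in FC(B)$, let $d$ be the order of its image in $W$; then $g^d \in P$ and $C_P(g^d) \supseteq C_B(g) \cap P$ has finite index, so $g^d \in FC(P)$. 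The image of $FC(P)$ in $\overline{P}$ lies in $FC(\overline{P})$, so it suffices to prove $FC(\overline{P}) = 1$: as $\overline{P}$ is torsion-free this says precisely that no finite-index subgroup of $\overline{P}$ has nontrivial center. Granting this, $g^d \in \langle \pi \rangle \subseteq \langle \beta \rangle$, so $g$ is periodic.

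It then remains to treat a periodic element $g$ whose centralizer has finite index. Here I would invoke Bessis's analysis of periodic elements (Springer's theory of $\zeta$-regular elements lifted to $B$): up to conjugacy, $g$ is a root of a power of $\beta$ attached to a $\zeta$-regular element $w \in W$, and $C_B(g)$ is isomorphic to the braid group $B'$ of the reflection group $C_W(w)$ acting on $V' = \ker(w - \zeta) \subseteq \C^n$, so that $\mathrm{cd}\, B' = \dim_{\C} V'$. Since cohomological dimension is preserved by passing to finite-index subgroups of torsion-free groups, finite index of $C_B(g)$ in $B$ forces $\dim_{\C} V' = n$, i.e. $w$ acts as a scalar and $g \in \langle \beta \rangle = Z(B)$. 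This yields $FC(B) = Z(B)$ and hence the theorem.

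The main obstacle is the claim $FC(\overline{P}) = 1$ from the second paragraph, namely that every finite-index subgroup of $\pi_1(M)$ has trivial center. This is the step where the irreducibility and essentialness of $W$ must be used decisively: a central $\Z$ inside a finite-index subgroup $V \le \overline{P}$ would, since $V$ is again a duality group (of dimension $n-1$), realize $V$ cohomologically as a central extension of an $(n-2)$-dimensional duality group by $\Z$, making the corresponding finite cover of $M$ behave like a circle bundle; but the dilation $\C^*$ is the only central circle in the picture and has already been removed in passing to $M$, so this contradicts the structure of the Orlik--Solomon cohomology of an irreducible essential arrangement. I expect the honest verification of this incompatibility (whether through duality-group cohomology or through an acylindrical-hyperbolicity argument for $\pi_1(M)$) to be the crux; by contrast, the periodic case of the third paragraph is essentially formal once Bessis's centralizer computation is available.
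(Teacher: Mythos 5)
Your reformulation of the theorem as $FC(B)=Z(B)$, the reduction of the non-periodic part to the statement $FC(\overline{P})=1$ for $\overline{P}=P/\langle\pi\rangle$, and the treatment of periodic elements via Bessis's description of centralizers of roots of $\pi$ (with Serre's theorem on cohomological dimension of finite-index subgroups forcing $\dim V'=n$) are all sound in outline. But the argument has a genuine gap exactly where you say the crux is: the claim $FC(\overline{P})=1$, i.e.\ that no finite-index subgroup of $\overline{P}$ has nontrivial center, is essentially the original theorem restricted to $P$ modulo its center, so you have reduced the problem to an equivalent one rather than solved it. The heuristic you offer does not close it. A finite-index subgroup $V\leq\overline{P}$ corresponds to a finite cover of the projectivized complement $M$, which is \emph{not} a hyperplane arrangement complement, and its cohomology is not an Orlik--Solomon algebra, so ``the structure of the Orlik--Solomon cohomology of an irreducible essential arrangement'' gives no handle on it. Moreover a central $\Z$ inside a duality group of dimension $n-1$ is not by itself contradictory (Heisenberg-type groups are Poincar\'e duality groups with center $\Z$); to derive a contradiction one would need to show the quotient $V/\Z$ is again a duality group and that the resulting product-like structure on $H^*(V)$ is incompatible with something concretely known about the cover --- none of which is supplied. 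As written, the step ``the dilation $\C^*$ is the only central circle in the picture'' is an assertion of the conclusion, not an argument.

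For comparison, the paper proves the theorem by entirely different means and never passes through $FC$ or through duality-group cohomology. For every irreducible $W$ except $G_{31}$ it exhibits a Garside monoid for $B$ satisfying three combinatorial conditions on atoms and their lcm's, and proves (Proposition \ref{rb=bt} and Corollary \ref{center included}) that in such a group any subgroup containing a nontrivial power of each atom --- in particular any finite-index subgroup --- has its center inside $Z(B)$; the key point is a cancellation argument showing that $r^jb=bz$ forces $z=t^j$ with $rb=bt$. For $G_{31}$, where no such monoid is available, the paper embeds $B_{31}$ into $B_{37}$ (type $E_8$), decomposes the generalized Krammer representation into two $60$-dimensional faithful pieces, and verifies by an explicit computation modulo a prime that the images of the $x_i^{2N}$ already generate the full matrix algebra, so that $Z(U)$ centralizes all of $B_{31}$. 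If you want to salvage your approach, you would need an independent proof that finite-index subgroups of $\overline{P}$ (equivalently of $P$) have cyclic center, and that is precisely the hard content of the theorem.
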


%The goal of this note is to prove this theorem.

%In Remark 12.4 in \cite{BESSIS}, it is stated
%that proving theorem \ref{thzp} from the Garside structures
%introduced in \cite{BESSIS} `should not be too difficult'. In section ???
%we proved that this is indeed the case.

\section{Garside theory}

A Garside monoid is a cancellative monoid which is
generated by its atoms (that is 
elements which have no proper divisors), such that any two elements
have a least common right-multiple and a greatest common left-divisor
and such that there exist an element $\Delta$ (a ``Garside element'') whose
left- and right-divisors are the same and generate the monoid.
Here $a$ left-divides $b$, which will be denoted by $a\preccurlyeq b$,
means that there exists $c$ with $b=ac$ and similarly for right-divisibility.
We refer to \cite{DEHORNOY} or \cite{DIGNEMICHEL} for the basic notions on Garside theory.
We will use in particular the following property: if $M$ is a Garside monoid,
let $\alpha(x)$ denote the left-gcd of an element $x\in M$ with $\Delta$, then
$\alpha(xy)=\alpha(x\alpha(y))$ for any $y\in M$.

In this section, we consider a Garside monoid $M$ which satisfies
in addition the following properties:

\begin{enumerate}
\item[(i)] There is an additive length function on $M$ such that the atoms
have length 1.

This implies that the atoms are precisely the elements of length 1 and that
the only element of length 0 is 1.

\item[(ii)]For every couple of atoms $s \neq t$, their right-lcm $\Delta_{s,t}$
is balanced (meaning that its left- and right-divisors are the same).
\item[(iii)] For any couple of atoms $s \neq t$ and any positive integer $n$,
the left-gcd of $\Delta_{s,t}$ and $s^n$ is equal to $s$.
\end{enumerate}

Note that many of the monoids which have been studied
for complex braid groups and for Artin-Tits groups satisfy the above conditions.
In particular we have
\begin{proposition}\label{monoid list} Let $M$ be one of the following monoids:
\begin{enumerate}
\item[(M1)]
The classical monoid of positive elements in the Artin-Tits groups
associated to finite Coxeter groups (see \cite{DEHORNOY}).
\item[(M2)]
The ``dual'' monoid of \cite{BESSIS}.
\item[(M3)]
The ``parachute'' monoid of \cite{CORRANPICANTIN}. 
\item[(M4)] The ``dual'' monoids for Artin-Tits groups of type $\tilde A$ and
$\tilde C$ of \cite{DIGNE2} and \cite{DIGNE3}.
\item[(M5)] The monoids $f(h,m)$ (for $h,m \geq 1$) presented by generators $x_1,\dots,x_m$ and relations
$$
\underbrace{x_1 x_2 \dots x_m x_1 \dots}_{h \mbox{ terms}} = 
\underbrace{x_2 x_3\dots x_m x_1 \dots}_{h \mbox{ terms}}=  \dots
$$
\end{enumerate}
Then $M$ is a Garside monoid and satisfies (i), (ii) and (iii).
\end{proposition}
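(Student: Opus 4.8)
The plan is to separate the Garside property from the three numbered conditions, since the former is already available in the literature for each family. For (M1) the Garside structure of the classical positive monoid of a finite-type Artin-Tits group is classical (see \cite{DEHORNOY}); for (M2) it is established by Bessis in \cite{BESSIS}, for (M3) by Corran--Picantin in \cite{CORRANPICANTIN}, and for (M4) in \cite{DIGNE2} and \cite{DIGNE3}; for (M5) one checks directly that the $f(h,m)$ are Garside with Garside element the common value $\Delta$ of the defining words. So the real work is to verify (i), (ii) and (iii) in each case.

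Condition (i) I would obtain uniformly from homogeneity: every defining relation of each of these presentations equates two words of the same length (length $m_{s,t}$ for (M1), reflection length for (M2)--(M4), length $h$ for (M5)), so the word length on the free monoid descends to a well-defined additive function $\ell\colon M\to\Z$ with $\ell(s)=1$ for each atom $s$. Additivity and positivity then force the length-$0$ elements to be $\{1\}$ and the length-$1$ elements to be exactly the atoms, which is the asserted implication. The same homogeneity shows that each atom $s$ generates a free submonoid, so that the only left-divisors of $s^n$ are the powers $s^k$ with $0\le k\le n$; I will use this remark when treating (iii).

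For (ii) and (iii) the key object is the right-lcm $\Delta_{s,t}$ of a pair of distinct atoms, which I would read off directly from the defining relations. In each family the relations are precisely the equalities between the reduced words for $\Delta_{s,t}$, so $\Delta_{s,t}$ is the Garside element of the sub-structure generated by $s$ and $t$ (for (M1) the rank-$2$ parabolic, with $\Delta_{s,t}=\underbrace{sts\cdots}_{m_{s,t}}$; for (M5) the global $\Delta$). Balancedness (ii) follows from the anti-automorphism reversing words, which each of these presentations admits because its relation set is reversal-invariant: this map exchanges left- and right-divisors and fixes $\Delta_{s,t}$ whenever the set of its reduced words is reversal-stable, so the two divisor sets coincide. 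For (iii) I would use the reformulation that, by the freeness remark, the left-gcd of $\Delta_{s,t}$ with $s^n$ is a power $s^k$, whence (iii) is equivalent to $s^2\not\preccurlyeq\Delta_{s,t}$; this I would check by confirming that no reduced word for $\Delta_{s,t}$ begins with $ss$, i.e. that consecutive letters of a reduced word for the lcm of two atoms are always distinct.

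The main obstacle is that for the dual monoids (M2), the parachute monoid (M3) and the affine dual monoids (M4), the right-lcm of two atoms need not come from a rank-$2$ parabolic and can be a genuinely longer element, so one cannot simply invoke the dihedral picture of (M1). For these I expect to argue case by case from the explicit description of the simple elements (noncrossing partitions, respectively the combinatorial models of \cite{CORRANPICANTIN}, \cite{DIGNE2} and \cite{DIGNE3}): first identify $\Delta_{s,t}$ as the least simple element that is a common multiple of $s$ and $t$, then read off its lattice of left-divisors from that model to confirm both that it is reversal-stable (hence balanced) and that $s^2$ fails to divide it. Controlling all factorizations of $\Delta_{s,t}$ in the affine cases of (M4), where the lattice of simples is infinite, will be the delicate point.
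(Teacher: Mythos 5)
Your proposal works for (M1) and (M5), but the cases you yourself flag as ``the main obstacle'' --- (M2), (M3), (M4) --- are exactly where the content of the proposition lies, and the two tools you propose for them do not work. First, your claim that homogeneity implies that the only left-divisors of $s^n$ are the powers of $s$ is false: the monoid $\langle a,b \mid a^2=b^2\rangle$ (which the paper itself cites, following Godelle, as a homogeneous Garside monoid failing (iii)) satisfies $b\preccurlyeq a^2$. Homogeneity only gives that the submonoid generated by $s$ is free, not that it is closed under taking divisors inside $M$; so your reduction of (iii) to ``$s^2\not\preccurlyeq\Delta_{s,t}$'' is unjustified as stated. Second, the reversal criterion for balancedness fails for dual monoids: in the dual monoid of type $A_2$ with atoms $a,b,c$ and $\delta=ab=bc=ca$, the reversed words $ba,cb,ac$ represent a different element, so neither the relation set nor the set of reduced words of $\Delta_{s,t}$ is reversal-stable --- even though $\delta$ \emph{is} balanced (its left- and right-divisors are both $\{1,a,b,c,\delta\}$). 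Balancedness here comes from the lattice of noncrossing partitions, not from word reversal.

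The paper closes (M2) and (M4) using two known structural facts about dual monoids: every divisor of the Garside element $\Delta$ is balanced (giving (ii) at once, since $\Delta_{s,t}\preccurlyeq\Delta$), and every decomposition of $\Delta$ into atoms has pairwise distinct atoms, hence $s^2\not\preccurlyeq\Delta$. Condition (iii) is then obtained not by controlling all divisors of $s^n$ but via the identity $\alpha(xy)=\alpha(x\alpha(y))$ for the head function $\alpha(x)=\gcd(x,\Delta)$: one gets $\alpha(s^2)=s$ by a length count, hence $\alpha(s^n)=s$ by induction, and the left-gcd of $s^n$ with $\Delta_{s,t}$ divides $\alpha(s^n)$. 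For (M3) the paper does not analyse the Corran--Picantin model directly: it invokes the result of Callegaro--Marin that any pair of distinct atoms embeds in a submonoid of type (M1) or (M2) compatibly with right-lcms and left-divisibility, reducing (ii) and (iii) to the previous cases. Without these ingredients (or genuine substitutes for them) your argument does not go through for (M2)--(M4).
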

\begin{proof}
In case (M1) since in the classical monoid we have
$\Delta_{s,t}=\underbrace{sts\ldots}_{e}=\underbrace{tst\ldots}_{e}$  for some
$e\ge  2$ and these are the only  decompositions of $\Delta_{st}$, we get that
$\Delta_{st}$  is balanced.  Moreover the  only divisors  of $s^n$ are smaller
powers of $s$, of which only $s$ divides $\Delta_{s,t}$.

In cases (M2) and (M4) all divisors of $\Delta$ are balanced, in particular the
right-lcm  of two atoms. Moreover in any decomposition of $\Delta$ into a product of
atoms,  all atoms are different, hence the same property holds for divisors of
$\Delta$.  If $s$ is an atom we have thus $s^2\not\preccurlyeq\Delta$, so that
$\alpha(s^2)\neq  s^2$.  Since  $\alpha(s^2)$  has  length  at  most  2 and is
different  from $s^2$  it has  to be  of length  1, hence  equal to $s$ and by
induction  we  get  $\alpha(s^n)=\alpha(s\alpha(s^{n-1}))=\alpha(s^2)=s$. This
implies  that the left-gcd of $s^n$ with $\Delta_{s,t}$, which divides
$\alpha(s^n)$, is equal to $s$.

In the monoid $M$ for $G(e,e,r)$ introduced in \cite{CORRANPICANTIN}, any pair
of  dictinct atoms can be embedded in a submonoid $M'$ which is of type (M1) or
(M2)  (see \cite[section  6.3]{CALLEGAROMARIN}). This  embedding maps  right-lcms in
$M'$  to  right-lcms  in  $M$  and  left-divisibility  in $M'$ is the restriction of
left-divisibility in $M$ (see \cite[lemmas 5.1 and 5.3]{CALLEGAROMARIN}).

This implies that for properties (ii) and (iii) we are
either in the first or second situation, depending on the choice of the
couple of atoms .

The monoids (M5) have been investigated by M. Picantin in his thesis \cite{PICTHESE}.
Property (i) is clear, as the presentation is homogeneous. It is readily
checked that the left-gcd of two distinct atoms is the Garside element
$\Delta = x_1 x_2 \dots x_m x_1 \dots$ ($h$ terms), which proves (ii). Finally, (iii)
follows from the fact that every divisor of $\Delta$ but itself has
a unique decomposition as a product of atoms.

\end{proof}

Complex braid groups however may be the group of fractions of
Garside monoids which do not fulfill our conditions. An example,
pointed to us by M. Picantin, is given by the monoid presented by generators
$\sigma_1,\dots,\sigma_{n-1}$,  $\tau_1,\dots,\tau_{n}$ and
relations
 $
\sigma_i \sigma_{i+1} \sigma_i = \sigma_{i+1} \sigma_{i} \sigma_{i+1}$,
$\sigma_i \sigma_j = \sigma_j \sigma_i \mbox{\ for \ } |j-i| \geq 2$,
$\sigma_i \tau_i \tau_{i+1} = \tau_i \tau_{i+1} \sigma_i$,
%\tau_i \sigma_i = \sigma_i \tau_i, 
$\sigma_i \tau_i = \tau_{i+1}\sigma_i$, $\sigma_i \tau_j = \tau_j \sigma_i$
for $j \not\in \{i,i+1 \}$. It is a Garside monoid for the Artin group
of type $B_n$, for which the right lcm $\sigma_i \tau_{i} = \tau_{i+1} \sigma_i$ of $\sigma_i$ and $\tau_{i+1}$
is clearly not balanced, hence condition (ii) is not satisfied. This presentation dates back to \cite{CHOW},
and the properties of the associated monoid are studied in \cite{PICCHOW}.

As pointed out to us by E. Godelle, there exist Garside monoids
satisfying (i) and (ii) but not (iii), such as
$M = <a, b \ | \ a^2 = b^2 >$, which provides a counterexample to the
next proposition. In \cite[proposition 4.4.3]{GODELLE} the next
proposition is proved for a classical Artin-Tits monoid (see also 
\cite[lemma 2.2]{GODELLE2}).

\begin{proposition}\label{rb=bt}  Let $M$ be a  Garside monoid satisfying (i),
(ii)  and (iii) above. Let $r$ be an atom  of $M$ and $b,z \in M$ be such that
$r^j b = b z$ for some $j \geq 1$; then $z = t^j$ for some atom $t$ with $rb =
bt$.
\end{proposition}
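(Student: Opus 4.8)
The first move is to reduce everything to producing a single atom $t$. By the additive length function (i), the relation $r^j b = bz$ forces $\ell(z)=j$. I claim it then suffices to prove the following \emph{Key Lemma}: for every atom $r$ and every $b\in M$ admitting some $z$ with $r^jb=bz$ for some $j\ge 1$, one has $b\preccurlyeq rb$, i.e. $rb=bt$ for some $t\in M$. Indeed, granting this, comparing lengths gives $\ell(t)=\ell(rb)-\ell(b)=1$, so $t$ is an atom. A one-line induction on $k$ then shows $r^k b = bt^k$ for all $k\ge 0$: the inductive step is $r^{k+1}b=r(r^kb)=r(bt^k)=(rb)t^k=(bt)t^k=bt^{k+1}$. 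Taking $k=j$ yields $bt^j=r^jb=bz$, and left-cancellation (valid since $M$ is cancellative) gives $z=t^j$, which is exactly the conclusion.

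Second, I would prove the Key Lemma by induction on $\ell(b)$, which is the only place where hypotheses (ii) and (iii) enter. If $\ell(b)=0$ then $b=1$ and $rb=r=b\cdot r$. If $\ell(b)\ge 1$, pick an atom $s$ with $s\preccurlyeq b$ and write $b=sb_1$. If $s=r$, then cancelling one $r$ in $r^{j+1}b_1=r^jb=bz=rb_1z$ gives $r^jb_1=b_1z$ with $\ell(b_1)<\ell(b)$; the induction hypothesis yields an atom $t$ with $rb_1=b_1t$, whence $rb=r^2b_1=(rb_1)t=b_1t^2=bt$, so again $b\preccurlyeq rb$.

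Third, the genuine case is $s\neq r$ (equivalently $r\not\preccurlyeq b$), and this is where I expect the main difficulty. Here $r\preccurlyeq r^jb=bz$ and $s\preccurlyeq b\preccurlyeq bz$, so the right-lcm $\Delta_{r,s}$ of the two distinct atoms divides $bz$ on the left; since $r\not\preccurlyeq b$ we also have $\Delta_{r,s}\not\preccurlyeq b$. The plan is to use the two structural hypotheses to transport the relation to a strictly shorter element: balancedness (ii) of $\Delta_{r,s}$ lets me move $r$ through the common part $g=\gcd(\Delta_{r,s},b)$, while (iii)—that the left-gcd of $\Delta_{r,s}$ with any power $r^n$ is exactly $r$—guarantees that the relevant ``continuation atom'' is well defined and that no higher power of $r$ accumulates inside $\Delta_{r,s}$. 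Writing $b=g\beta$, the aim is to extract from $\Delta_{r,s}\preccurlyeq g\beta z$ a relation ${r'}^{\,j}\beta=\beta z$ for a suitable atom $r'$, apply the induction hypothesis to the shorter $\beta$, and push the resulting identity back through $g$ to recover $rb=bt$. The delicate point—and the main obstacle—is organising this transport so that the complement atom $r'$ is correctly identified and the push-back is exact. That (iii) is indispensable is already visible from the preceding remark's monoid $\langle a,b\mid a^2=b^2\rangle$, where $\Delta_{a,b}=a^2$ violates (iii) and the statement fails: $a^2b=b\,b^2=bz$ with $z=b^2$, yet no atom $t$ satisfies $ab=bt$. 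In the classical Coxeter setting this transport is precisely Godelle's Proposition~4.4.3, which reassures me that the argument closes.
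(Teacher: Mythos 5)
Your reduction to the ``Key Lemma'' ($rb=bt$ for a single atom $t$, then $r^kb=bt^k$ by induction and $z=t^j$ by cancellation) is correct, as is the treatment of the cases $\ell(b)=0$ and $r\preccurlyeq b$; this matches the skeleton of the paper's induction on $\ell(b)$. But the case $r\not\preccurlyeq b$ is the entire content of the proposition, and there you only describe a plan and explicitly concede that ``the delicate point --- and the main obstacle --- is organising this transport.'' That is precisely the step that is not done, so the proof is incomplete where it matters.

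Moreover, the plan as sketched would not close as stated. You take an \emph{arbitrary} atom $s\preccurlyeq b$ with $s\ne r$ and propose to transport $r$ through $g=\gcd(\Delta_{r,s},b)$; but a proper left-divisor $g$ of the balanced element $\Delta_{r,s}$ need not satisfy any identity of the form $rg=gr'$, so there is no reason the relation descends to the cofactor $\beta$. What the paper does instead is: (a) prove a separate lemma (by induction on $j$, repeatedly replacing an atom $u\ne r$ dividing $r^kb$ by $\Delta_{r,u}=r\delta_{r,u}$ and peeling off one $r$) showing that there is a \emph{specific} atom $s\ne r$ for which the whole element $\delta_{r,s}$, defined by $\Delta_{r,s}=r\delta_{r,s}$, left-divides $b$; here (iii) is used to guarantee $r\not\preccurlyeq\delta_{r,u}$ so the induction can continue; and (b) write $b=\delta_{r,s}b'$ and use balancedness of $\Delta_{r,s}$ to get $\Delta_{r,s}=\delta_{r,s}u$ for an atom $u$, hence the exact transport identity $r\delta_{r,s}=\delta_{r,s}u$, which converts $r^jb=bz$ into $u^jb'=b'z$ with $\ell(b')<\ell(b)$. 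Without an analogue of step (a) your decomposition $b=g\beta$ has no transport identity available, and without step (b) the ``push-back'' you hope for cannot be made exact. So the gap is genuine: the two lemmas you would need are exactly the ones you have not supplied.
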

\begin{proofprop} 
Note  that  by properties  (i) and  (iii), given  two atoms $r\ne t$, the
element $\delta_{r,t}$ defined by $\Delta_{r,t}=r\delta_{r,t}$ satisfies
$\delta_{r,t}\ne 1$ and $r\not\preccurlyeq\delta_{r,t}$.

We first prove the following lemma.
\begin{lemma}  \label{lemgar1} Let $r$ be an atom in $M$ and $b \in M$ such
that  $r \not\preccurlyeq b$ and let $j\ge 1$ be such
that $r^j  b$ is divisible by some atom different from $r$;  
then there  exists an  atom $s\neq r$ such that $\delta_{r,s}
\preccurlyeq b$. 
\end{lemma}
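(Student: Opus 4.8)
The plan is to argue by induction on $j$, holding $r$ and $b$ (with $r\not\preccurlyeq b$) fixed throughout. The engine of the whole argument is a single cancellation that I would carry out first, valid for every $j\ge 1$: since $j\ge 1$ we have $r\preccurlyeq r^jb$, and by hypothesis there is an atom $s\ne r$ with $s\preccurlyeq r^jb$. As $r^jb$ is then a common right-multiple of the two atoms $r$ and $s$, it is left-divisible by their least common right-multiple $\Delta_{r,s}$, so $\Delta_{r,s}\preccurlyeq r^jb$. Writing $\Delta_{r,s}=r\,\delta_{r,s}$ and cancelling the leading $r$ on the left (the monoid is cancellative) yields
$$\delta_{r,s}\preccurlyeq r^{j-1}b.$$
For the base case $j=1$ this is already $\delta_{r,s}\preccurlyeq b$, so the atom $s$ itself satisfies the conclusion and there is nothing more to do.

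For the inductive step $j\ge 2$ the idea is to re-enter the lemma with the exponent dropped by one. By the remark opening the proof of the proposition (which is where properties (i) and (iii) are used) we have $\delta_{r,s}\ne 1$ and $r\not\preccurlyeq\delta_{r,s}$. Being non-trivial, $\delta_{r,s}$ has positive length by (i) and hence admits an atom left-divisor $u\preccurlyeq\delta_{r,s}$; moreover $u\ne r$, for $u=r$ would give $r\preccurlyeq\delta_{r,s}$. Together with $\delta_{r,s}\preccurlyeq r^{j-1}b$ this shows $u\preccurlyeq r^{j-1}b$ with $u\ne r$, i.e.\ $r^{j-1}b$ is divisible by an atom different from $r$. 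Since the standing hypothesis $r\not\preccurlyeq b$ is untouched, the induction hypothesis for $j-1$ applies and produces an atom $s'\ne r$ with $\delta_{r,s'}\preccurlyeq b$, as wanted.

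The step I expect to carry the actual content — and the one I would state most carefully — is this inductive reduction, because the atom furnished by the conclusion need not be the atom $s$ handed to us by the hypothesis: each pass of the induction is allowed to change it. What makes the recursion close is precisely the remark $r\not\preccurlyeq\delta_{r,s}$: after cancelling a single $r$, the surviving factor $\delta_{r,s}$ is guaranteed to contribute a genuinely new non-$r$ atom dividing $r^{j-1}b$, which is exactly the hypothesis needed one level down. Beyond this remark (hence beyond (i) and (iii)) and the defining property of $\Delta_{r,s}$ as a least common right-multiple together with cancellativity, no further structure — in particular no direct appeal to balancedness (ii) or to the identity $\alpha(xy)=\alpha(x\alpha(y))$ — seems to be required for this particular lemma.
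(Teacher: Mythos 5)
Your proof is correct and is essentially identical to the paper's: the same induction on $j$, the same cancellation $\Delta_{r,s}=r\,\delta_{r,s}\preccurlyeq r^jb \Rightarrow \delta_{r,s}\preccurlyeq r^{j-1}b$, and the same use of the remark that $\delta_{r,s}\ne 1$ and $r\not\preccurlyeq\delta_{r,s}$ to supply the non-$r$ atom needed to re-enter the induction at $j-1$. Your observations that the witnessing atom may change at each step and that only (i), (iii), the lcm property and cancellativity are used are accurate and match the paper.
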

\begin{proof} 

The proof is by induction on $j \geq 1$. Let $u \neq r$ be an atom
with $u \preccurlyeq r^j b$. Then $\Delta_{r,u}
\preccurlyeq r^j b$; writing $\Delta_{r,u} = r \delta_{r,u}$,
we get $\delta_{r,u} \preccurlyeq r^{j-1} b$. If $j = 1$ the conclusion holds
with $s=u$, and this covers the case $j = 1$.

In the case $j > 1$, since $\delta_{r,u} \neq 1$ and $r\not\preccurlyeq
\delta_{r,u}$, we have $v \preccurlyeq \delta_{r,u}$
for some atom $v \neq r$ hence $v \preccurlyeq r^{j-1} b$
and we can apply the induction assumption to $j-1< j$ and get the result.
\end{proof}

We also need the following lemma.% from classical Garside theory.

\begin{lemma} \label{lemgar2} Let $b \in M$ be a balanced element. Then, for any
atom $r \preccurlyeq b$ there exists an atom $t \preccurlyeq b$
with $rb = bt$, and $r \mapsto t$ provides a permutation of
the atoms dividing $b$.
\end{lemma}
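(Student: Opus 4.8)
The plan is to produce $t$ as the conjugate $b^{-1}rb$ and to verify, using only the balanced hypothesis together with the length function of property~(i), that this conjugate is again an atom dividing $b$. First I would use $r\preccurlyeq b$ to write $b=ry$. Since then $y$ is a right-divisor of $b$ and $b$ is balanced, $y$ is also a left-divisor, so $b=yt$ for some $t\in M$. The two factorizations give $ry=b=yt$, and substituting $ry=yt$ into $rb=r(ry)$ yields $rb=r(yt)=(ry)t=bt$; this already establishes the relation $rb=bt$ with $t\in M$, which is the heart of the matter.

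Once this identity is available, the rest is bookkeeping with the length function. From $b=ry$ one gets $\ell(y)=\ell(b)-1$, and from $b=yt$ one gets $\ell(t)=\ell(b)-\ell(y)=1$, so $t$ is an atom. Moreover $b=yt$ exhibits $t$ as a right-divisor of $b$, and balancedness of $b$ then gives $t\preccurlyeq b$. Thus $r\mapsto t$ sends atoms dividing $b$ to atoms dividing $b$.

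Finally I would argue that this assignment is a permutation. It coincides with the restriction of the conjugation $x\mapsto b^{-1}xb$ in the group of fractions, hence is injective; and since the atoms dividing $b$ lie among the finitely many left-divisors of $b$, an injective self-map of this finite set is bijective. The step I expect to require the most care is the very first one: the relation $rb=bt$ must be extracted from the mere coincidence of the two factorizations $ry=yt$ of the balanced element $b$, rather than assumed through some prior knowledge that $b$ left-divides $rb$. Note that this argument uses only property~(i) and the balanced hypothesis, so that neither Lemma~\ref{lemgar1} nor conditions (ii) and (iii) are needed here.
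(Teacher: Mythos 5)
Your proof is correct and takes essentially the same route as the paper's: write $b=rb'$, use balancedness to convert the right-divisor $b'$ into a left-divisor so that $b=b't$, deduce $rb=bt$ by associativity, and read off from the length function that $t$ is an atom dividing $b$. The only cosmetic difference is at the very end, where the paper obtains surjectivity from the obvious reverse construction (starting from an atom right-dividing $b$) rather than from injectivity of a self-map of a finite set; both are valid.
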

\begin{proof}

For  $r \preccurlyeq b$ we write $b =  rb'$. Since $b$ is balanced we have $b'
\preccurlyeq  b$ hence $b = b' t$ for  some $t \in M$ (which obviously divides
$b$).  Thus $rb =  rb' t =  bt$ and $r  \mapsto t$ is well-defined and clearly
injective  from the set  of divisors of  $b$ to itself.  Because of the length
function  it maps atoms to atoms, and  it is surjective by the obvious reverse
construction.
\end{proof}

We show now the proposition by induction on the length of $b$, 
the case of length
$0$ being trivial. If $r \preccurlyeq b$ then, by simplifying $b$ by $r$,
we  get  the  result  by  induction.  We  thus  can  assume  $b \neq 1$ and $r
\not\preccurlyeq  b$. By lemma \ref{lemgar1} there is  an atom $s \neq r$ such
that  $\delta_{r,s} \preccurlyeq b$. Writing $b = \delta_{r,s} b'$ we get $r^j
\delta_{r,s}  b'  =  \delta_{r,s}  b'  z$.  As  $\Delta_{r,s}$ is balanced, it
conjugates  $r$ to some atom $u$,  by lemma \ref{lemgar2}, hence, cancelling a
power of $r$, we get $r \delta_{r,s} = \delta_{r,s} u$, whence
$\delta_{r,s}  u^j  b'  =  \delta_{r,s}  b'  z$. Cancelling $\delta_{r,s}$ and
applying  the induction assumption we get $z =  t^j$ for some atom $t$ with $u
b'  = b't$, so $rb = r \delta_{r,s} b' = \delta_{r,s} u b' = \delta_{r,s} b' t
= bt$. \end{proofprop}

\medskip    

We  first prove a general  corollary for a Garside group,  that is for the
group  of fractions $G$ of  a Garside monoid $M$.  It is a property of Garside
monoids that $G$ is generated as a monoid by $M$ and the powers of $\Delta$.

\begin{corollary}\label{center included}
Let $G$ be the group of fractions of a Garside monoid
$M$ satisfying properties (i), (ii) and (iii)
then if $U$ is a subgroup of $G$ containing a nontrivial power of each atom, 
the center of $U$ is contained in the center of $G$.
\end{corollary}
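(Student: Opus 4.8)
The plan is to reduce everything to the already-established Proposition \ref{rb=bt}. Let $g\in Z(U)$; I want to show that $g$ commutes with every atom $r$, for then $g$ commutes with every element of $M$, and since $G$ is generated as a monoid by $M$ together with the powers of $\Delta$, this forces $g\in Z(G)$. By hypothesis, for each atom $r$ there is an integer $j\geq 1$ with $r^{j}\in U$, and since $g$ centralizes $U$ we have $g r^{j}=r^{j}g$. So the whole task is to upgrade "commutes with $r^{j}$'' to "commutes with $r$''.

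The first step is to put $g$ in a form in which this commutation becomes a genuine equation inside $M$. Recall that conjugation by $\Delta$ induces an automorphism of $M$, which I will denote $\tau$ (so $\Delta x=\tau(x)\Delta$ for $x\in M$); being an automorphism it permutes the atoms. A routine manipulation, pushing negative powers of $\Delta$ to the right via $\tau$ and using that every element of $M$ divides some power of $\Delta$, lets me write $g=b\Delta^{-m}$ with $b\in M$ and $m\geq 0$. Setting $r'=\tau^{-m}(r)$, which is again an atom, the identity $\Delta^{-m}r^{j}=(r')^{j}\Delta^{-m}$ transforms $gr^{j}=r^{j}g$ (after cancelling $\Delta^{-m}$) into
$$ r^{j}b=b\,(r')^{j}. $$

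Now I would apply Proposition \ref{rb=bt} to this equation with $z=(r')^{j}$: it produces an atom $t$ with $z=t^{j}$ and $rb=bt$. The one remaining point is that $t=r'$, and this is exactly where property (iii) is needed. If $t\neq r'$, then the left-gcd of $\Delta_{t,r'}$ with $t^{j}$ equals $t$ while its left-gcd with $(r')^{j}$ equals $r'$; but $t^{j}=(r')^{j}$ forces these two left-gcds to coincide, whence $t=r'$, a contradiction. Thus $t=r'$ and $rb=br'$. Unwinding $g=b\Delta^{-m}$ gives $gr=b\tau^{-m}(r)\Delta^{-m}=br'\Delta^{-m}=rb\Delta^{-m}=rg$, so $g$ commutes with $r$.

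Since $r$ was an arbitrary atom, $g$ commutes with all of $M$, hence with all of $G$, giving $g\in Z(G)$ as desired. The substantive input is entirely Proposition \ref{rb=bt}; within the corollary itself the only genuinely delicate point is the uniqueness of the $j$-th root of an atom power, which I isolate above as a direct consequence of (iii). This is precisely the ingredient that fails for the monoid $\langle a,b\mid a^{2}=b^{2}\rangle$, so I expect any attempt to bypass (iii) to be the real obstacle.
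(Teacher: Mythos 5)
Your proof is correct and follows essentially the same route as the paper's: reduce to a positive equation $r^{j}b=b(r')^{j}$ in $M$ by clearing the negative power of $\Delta$ (the paper writes $b=x\Delta^{i}$ rather than $g=b\Delta^{-m}$, which is the same normalization), apply Proposition \ref{rb=bt}, and then use property (iii) exactly as the paper does to identify the resulting atom $t$ with $r'$ via the left-gcd of $t^{j}=(r')^{j}$ with $\Delta_{t,r'}$. No substantive difference.
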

\begin{proof}
If  $x \in Z(U)$, then  for any atom $s$,  there exists a positive integer $N$
such  that  $x$  commutes  with  $s^N$.  Let  $i$  be  large  enough  so  that
$x\Delta^i\in  M$. Since conjugation  by $\Delta$ permutes  the atoms by lemma
\ref{lemgar2},  we have $s^Nx\Delta^i=xs^N\Delta^i=x\Delta^iu^N$ for some atom
$u$  such that $s\Delta^i =\Delta^iu$. Proposition \ref{rb=bt} applied with $b
=x  \Delta^i$, $r = s$, $j = N$  implies that $s x\Delta^i = x\Delta^i t$, for
some  atom $t$, hence we have $t^N=u^N$,  which gives $t=u$ by property (iii),
since  the left-gcd of $t^N=u^N$ with $\Delta_{t,u}$ has  to be equal to $t$ and to
$u$. Hence $s x\Delta^i = x\Delta^i u$ and $sx = xs$.
\end{proof}

A particular case of the above corollary is the following, using the fact that
for  monoids of type (M4) in proposition \ref{monoid list}, the squares of the
atoms lie in the pure group.

\begin{corollary}
The  center of  pure Artin-Tits  groups of  type $\tilde  A$ or  $\tilde C$ is
trivial.
\end{corollary}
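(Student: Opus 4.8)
The plan is to read this off from Corollary~\ref{center included}, taking for the Garside group $G$ the Artin-Tits group $B$ of type $\tilde A$ or $\tilde C$ and for $U$ the pure group $P=\ker(B\to W)$, where $W$ is the corresponding affine Weyl group. By item (M4) of Proposition~\ref{monoid list}, such a $B$ is the group of fractions of one of the dual monoids of \cite{DIGNE2} and \cite{DIGNE3}, which are Garside monoids satisfying (i), (ii) and (iii); so Corollary~\ref{center included} is available as soon as I produce a subgroup containing a nontrivial power of every atom.

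For this I would use the quoted fact that for type (M4) the square $s^2$ of each atom $s$ lies in $P$ --- the atoms map to reflections of $W$, which are involutions, so $s^2\mapsto 1$ --- together with the length function (i), which guarantees $s^2\ne 1$. Thus $P$ contains the nontrivial power $s^2$ of each atom, and Corollary~\ref{center included} gives $Z(P)\subseteq Z(B)$.

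It then remains to see that $Z(B)$ is trivial, and this is the real obstacle. First, the center of an irreducible affine Weyl group is trivial (a central element must commute with all translations, forcing its linear part to be $1$, and then with the finite part, forcing the translation to be a global fixed vector, hence $0$); so the image of $Z(B)$ in $W$ lies in $Z(W)=1$ and $Z(B)\subseteq P$. To finish I would argue that $B$ has no nontrivial central element at all: the Garside element $\Delta$ of the dual monoid lifts a Coxeter element $c$ of $W$, and in an affine Weyl group a Coxeter element has infinite order, so no nonzero power of $\Delta$ lies in $P$ nor centralizes all the atoms; invoking the description of the center of these Garside groups in terms of central powers of $\Delta$, one concludes $Z(B)=1$, whence $Z(P)\subseteq Z(B)=1$. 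The delicate point is precisely this last step: the inclusion $Z(P)\subseteq Z(B)$ is immediate from the corollary, but excluding hidden central elements of $B$ genuinely uses the affine geometry and the precise dual Garside structures of \cite{DIGNE2} and \cite{DIGNE3}, and I would cite the triviality of $Z(B)$ from those sources should a self-contained Garside argument prove too technical.
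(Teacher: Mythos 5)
Your first two paragraphs reproduce exactly the paper's proof, which consists of the single sentence preceding the corollary: apply Corollary~\ref{center included} with $U=P$, which contains the nontrivial element $s^2$ for every atom $s$ because the atoms of the dual monoids (M4) map to reflections of the affine Weyl group; this gives $Z(P)\subset Z(B)$. Where you diverge is in noticing that this inclusion alone does not yield triviality --- one still needs $Z(B)\cap P=1$ --- a point the paper leaves entirely implicit. Your reduction is sound: $Z(W)=1$ for an irreducible affine Weyl group, so $Z(B)\subset P$ and hence $Z(P)=Z(B)$, and it remains to see $Z(B)=1$. The one soft spot is your proposed Garside argument for that last fact: the ``description of the center in terms of central powers of $\Delta$'' is a theorem (Picantin) for Garside monoids with \emph{finitely many} atoms, where some power of $\Delta$ is automatically central; the dual monoids of type $\tilde A$ and $\tilde C$ have infinitely many atoms and a Garside element with infinitely many divisors, so no nontrivial power of $\Delta$ is central and the structure theorem does not apply off the shelf. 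Your fallback --- citing the known triviality of the center of these affine Artin--Tits groups from the literature --- is the right move, and is presumably what the authors are tacitly relying on as well; so your write-up is, if anything, more honest about what is being used than the paper's own one-line justification.
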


Another application of proposition \ref{rb=bt} gives theorem \ref{thff}
for all finite complex reflection groups except $G_{31}$. 

\begin{corollary}  Assume  $W$  is  a  complex reflection group different from
$G_{31}$  and let $U$  be a finite  index subgroup of  $B$. Then $Z(U) \subset
Z(B)$.
\end{corollary}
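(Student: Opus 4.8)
The plan is to reduce the statement about an arbitrary complex reflection group $W \neq G_{31}$ to Corollary \ref{center included} by exhibiting a suitable Garside monoid. The key point is that for every such $W$, the braid group $B$ is the group of fractions of one of the monoids listed in Proposition \ref{monoid list}; this is precisely why that list was assembled. By Bessis's work, the well-generated exceptional groups $G_{24}, G_{25}, G_{26}, G_{27}, G_{29}, G_{32}, G_{33}, G_{34}$ admit a dual monoid of type (M2), and the infinite series $G(de,e,n)$ are handled either by classical monoids (M1), by the parachute monoid (M3) for the $G(e,e,r)$ case, or by the dual/parachute constructions; the rank-$2$ groups reduce to monoids of type (M5) via their standard presentations. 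So the first step is simply to record, for each $W$ in the list, which monoid $M$ realizes $B$ as its group of fractions, all satisfying properties (i), (ii), (iii).

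Once $M$ is fixed, the heart of the argument is verifying the hypothesis of Corollary \ref{center included}: that the finite index subgroup $U$ contains a nontrivial power of each atom. This is where the finite index assumption is used. Since $[B : U] < \infty$, for each atom $s \in M \subset B$ the cyclic subgroup $\langle s \rangle$ meets $U$ nontrivially; concretely, the images of $1, s, s^2, \dots, s^{[B:U]}$ in the finite set $B/U$ cannot all be distinct, so some $s^N$ with $1 \le N \le [B:U]$ lies in $U$. Thus $U$ contains a nontrivial power of every atom, which is exactly the condition required. Applying Corollary \ref{center included} then gives $Z(U) \subset Z(B)$, which is the desired conclusion.

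The main obstacle I expect is not the finite-index bookkeeping, which is routine, but the completeness of the case analysis: one must be certain that \emph{every} irreducible complex reflection group other than $G_{31}$ genuinely has a Garside monoid from the approved list, with the three properties verified. The delicate cases are the groups $G(e,e,r)$, where no single classical or dual monoid works globally and one must invoke the parachute monoid (M3); here Proposition \ref{monoid list} already does the work, reducing properties (ii) and (iii) to a pair-of-atoms embedding into a submonoid of type (M1) or (M2) via \cite{CALLEGAROMARIN}. The exclusion of $G_{31}$ reflects exactly that it is the one group for which no monoid on the list is known to apply (it is not well-generated, so Bessis's dual braid monoid is unavailable), which is why it is deferred to the separate argument in Section 3.

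Concretely, the proof I would write is short: fix $W \neq G_{31}$ and choose, according to the classification, a Garside monoid $M$ from Proposition \ref{monoid list} whose group of fractions is $B$. Let $U \le B$ have finite index $d = [B:U]$. For any atom $s$ of $M$, among the cosets $U, Us, \dots, Us^{d}$ two must coincide, yielding an integer $N$ with $0 < N \le d$ and $s^N \in U$; hence $U$ contains a nontrivial power of each atom. By Corollary \ref{center included}, $Z(U) \subset Z(B)$, as claimed.
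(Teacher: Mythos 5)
Your overall strategy matches the paper's: pick a Garside monoid from Proposition \ref{monoid list} whose group of fractions is $B$, observe that a finite index subgroup contains a nontrivial power of every atom (the pigeonhole argument on cosets you give is exactly the implicit step), and apply Corollary \ref{center included}. That part is fine.

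However, there is a genuine gap in your case analysis, and it is precisely the point you flag as the "main obstacle" but then wave away. It is \emph{not} true that every irreducible $W \neq G_{31}$ has a braid group realized by a monoid on the list: the parachute monoid (M3) and the dual monoid (M2) cover $G(e,e,r)$ and $G(d,1,r)$, but for the groups $G(de,e,r)$ with $d>1$ \emph{and} $e>1$ no monoid from Proposition \ref{monoid list} is known to apply, so Corollary \ref{center included} cannot be invoked directly for them. The paper closes this case by a separate reduction: $B(de,e,r)$ is a finite index subgroup of $B(de,1,r)$ (the classical braid group of type $B_r$), so a finite index subgroup $U$ of $B(de,e,r)$ is also of finite index in $B(de,1,r)$; applying the corollary there gives $Z(U)\subset Z(B(de,1,r))$, and since $Z(U)\subset B(de,e,r)$ this forces $Z(U)\subset Z(B(de,e,r))$. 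Without some such argument your proof does not cover these groups. (A smaller inaccuracy: the paper first reduces to $2$-reflection groups, which is how the Shephard groups $G_{25}, G_{26}, G_{32}$ and $G_{13}$ get handled via the classical monoid of a Coxeter group with the same $X/W$; and the type (M5) monoids are used only for $G_{12}$ and $G_{22}$, not for all rank-$2$ groups.)
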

\begin{proof}
It is well-known
that all possible braid groups $B$ can be obtained from a 2-reflection
group, so we can restrict to this case.
Corollary \ref{center included} gives the result for all complex braid group
for which a monoid satisfying (i), (ii) and (iii) is known. 
This covers all of them except for a complex reflection group of type $G_{31}$ and the infinite series
$G(de,e,r)$ for $d>1$ and $e>1$. Indeed, one can use
\begin{itemize}
\item the classical monoid for Coxeter groups and Shephard groups
(groups which have same $X/W$ as
a Coxeter group), as well as for $G_{13}$, which has the same braid group
as the Coxeter group $I_2(6)$ 
\item the parachute monoid for the $G(e,e,r)$
\item the dual monoid for the $G(e,e,r)$, $G(d,1,r)$ and the exceptional
groups of rank at least $3$ which are not $G_{31}$
\item the monoids $f(4,3)$ and $f(5,3)$ for $G_{12}$ and $G_{22}$.
\end{itemize}
Since the only 2-reflection exceptional groups
are $G_{12}$, $G_{13}$ and $G_{22}$ this indeed covers everything
but $G_{31}$ and the $G(de,e,r)$, for $d > 1, e>1$.

But the braid group associated with $G(de,e,r)$ is a subgroup of
finite  index in the  braid group associated  to $G(de,1,r)$ (and  thus
of the
classical braid group on $r$ strands), whence the result in this case.
\end{proof}
%\begin{proof}
%Let $\Delta$ be the
%Garside element of $M$,  $i \geq 1$ such that $\Delta^i \in Z(M)$,
%and $N = |B/U|$.
%If $x \in Z(U)$, then $x$ commutes with every $s^N$ for $s$ an
%atom of $M$, and so does $\Delta^{ia} x$ for $a \geq 0$. Since $\Delta^{ia} x \in M$
%for $a$ large enough, the proposition with $b = \Delta^{ia} x$, $r = s$, $j = N$ implies that
%$s^N = t^N$ for some atom $t$ with $s \Delta^{ia} x = \Delta^{ia} x t$.
%commutes with $s$, hence $\Delta^{ra} x \in Z(B)$ hence $x \in Z(B)$.
%But $s^N = t^N$ implies $s^{N|W|} = t^{N|W|} \in P$. By taking the image
%in the abelianization $P^{ab}$ this proves that $s$ and $t$
%are braided reflections along the same hyperplane. Since
%the atoms in all the monoids considered here correspond to distinct
%hyperplanes, this proves that $s = t$ hence $s \Delta^{ia} x = \Delta^{ia} x s$
%and $sx = xs$.
%\end{proof}

\section{Springer theory and $G_{31}$}

We let $B_{n}$ denote the braid group associated to $G_{n}$,
and $P_n = \mathrm{Ker} (B_n \to G_n)$ the corresponding pure braid group.
In particular $B_{37}$ denotes the Artin-Tits group of type $E_8$.
By Springer theory (see \cite{SPRINGER}), 
$G_{31}$ appears as the centralizer of
a regular element $c$ of order $4$ in $G_{37}$, and, 
as a consequence of \cite[thm. 12.5 (iii)]{BESSIS},
$B_{31}$ can be identified with the centralizer of a lift
$\tilde{c} \in B_{37}$ of $c$, in such a way that the natural diagram
$$
\xymatrix{
B_{31}\ar[r]\ar[d] &   B_{37}\ar[d] \\
G_{31}\ar[r] &   G_{37} \\
}$$
commutes.  The proof of theorem \ref{thzb} for $G_{31}$ was communicated to us
by  D. Bessis. For  the convenience of  the reader we  reproduce it here.
%and extend it to prove theorem \ref{thse}.
We let $\pi$ denote the positive generator of $Z(P_{37})$.
We have $\tilde{c}^4 = \pi$. On the other hand, it can be checked that $G_{37}$
has a regular element of order 24, and as a consequence of \cite{BROUE-MICHEL}
there exists $\tilde{d} \in B_{37}$ with $\tilde{d}^{24} = \pi$ whose image in
$G_{37}$ is a regular element $d$ of order $24$.
Moreover, by \cite[12.5 (ii)]{BESSIS},
$\tilde{d}^6$  is conjugated to $\tilde{c}$,  so up to conjugating $\tilde{d}$
we  can assume  $\tilde{d}^6 =  \tilde{c}$, and  in particular  $\tilde{d} \in
B_{31}$.  So $Z(B_{31})$ lies inside the  centralizer of $\tilde{d}$, which is
by  another application  of \cite[12.5  (iii)]{BESSIS} the  braid group of the
centralizer  in $G_{37}$  of $d$.  Since $24$  divides exactly  one reflection
degree of $G_{37}$, Springer's theory says that $d$ generates its centralizer,
as  its centralizer  is a  reflection group  whose single reflection degree is
$24$.  This implies  that the  centralizer of  $\tilde d$  is the cyclic group
generated  by $\tilde d$, thus any $x  \in Z(B_{31})$ is a power $\tilde{d}^a$
of  $\tilde{d}$. Since its image  in $G_{37}$ should lie  in $Z(G_{31}) = <d^6
>$,  $Z(B_{31}) = < \tilde{d}^6 >$ is a cyclic group, isomorphic to $\Z$ as it
is  infinite,  for  instance  because  $\beta_{31} \in Z(B_{31})$ has infinite
order.   But  $\beta_{31}   \in  Z(B_{31})   =  <   \tilde{d}^6  >$  satisfies
$\beta_{31}^4  = \pi_{31} = \pi_{37}  = (\tilde{d}^6)^4$, hence $\tilde{d}^6 =
\beta_{31}$  and $Z(B_{31})  = <  \beta_{31} >  \simeq \Z$. This concludes the
proof of theorem \ref{thzb}.

% is conjugated to $\beta_{31}$ in $B_{37}$ ...
%UTILISER L'ABELIANISE ?
 
%$x^{24}$ has trivial image in $G_{37}$

% On the other hand, it can be checked that the image $d$ of $\tilde{d}$
%in $G_{37}$ has cyclic

%which by \cite{BESSIS} 12.5 (iii) is the braid group
%centralizes 
%$\tilde{d}$, hence $x

%From the topological description of the map $B_{31} \to B_{37}$
%it is clear that the 

%, so there is some atom $t \preccurlyeq b$
%with $t \neq r$.

In order to prove theorem \ref{thff}, we will need an explicit
description of this embedding $B_{31} \hookrightarrow B_{37}$.
A presentation for $B_{37}$ and $B_{31}$ is given by the
Coxeter-like diagrams
\def\nnode#1{{\kern -0.6pt\mathop\bigcirc\limits_{#1}\kern -1pt}}
\def\bar{{\vrule width10pt height3pt depth-2pt}}
\def\lbar{{\vrule width19pt height3pt depth-2pt}}
\def\vertbar#1#2{\rlap{\kern4pt\vrule width1pt height17pt depth-7pt}
 \rlap{\raise19pt\hbox{$\kern -0.4pt\bigcirc\scriptstyle#2$}}
                 \nnode{#1}}
\font\mediumcirc=lcircle10 scaled \magstep 2
\def\sBcirc{$\hbox{\mediumcirc\char"6E}$}
$$\nnode{s_1}\bar\nnode{s_3}\bar\vertbar
{s_4}{s_2}\bar\nnode{s_5}\bar\nnode{s_6}\bar\nnode{s_7}\bar\nnode{s_8}$$
and
$$\nnode{x_4}\kern -2pt\raise8pt\hbox{$\diagup$}
      \kern -2.5pt\raise16pt\hbox{$\nnode{x_1}$}
      \kern-12.5pt\lbar\nnode{x_2}
   \kern-3.7pt\raise17.8pt\hbox{$\sBcirc$}
   \kern-17.8pt\lbar\nnode{x_5}
   \kern-20.5pt\raise16pt\hbox{$\nnode{x_3}$}
   \kern-2pt\raise8pt\hbox{$\diagdown$}$$
where the circle means $x_1 x_2 x_3 = x_2 x_3 x_1 = x_3 x_1x_2$. This latter
presentation was conjectured in \cite{BMR} and proved in \cite{BESSIS}.
%the presentations for
%$B_{31}$ conjectured in \cite{BMR} (and proved in \cite{BESSIS}).

We choose for regular element $\tilde{c} =  (s_4s_2s_3s_1s_4s_3s_5s_6s_7s_8)^6$,
and we use the algorithms of \cite{FRANCOMENESES} included in the 
{\tt GAP3} package {\tt CHEVIE} (see \cite{CHEVIE}) to find generators
for $B_{31}$, considered as the centralizer of $\tilde{c}$ in $B_{37}$.
From this we get a description of the embedding $B_{31} \hookrightarrow B_{37}$ as follows:
%Elementary manipulations then lead to generators which satisfy

%the development version of the
%CHEVIE package for GAP3 (see \cite{CHEVIE}),  
%we find that
%$B_{31}$, considered as the centralizer of $\beta_{31}$ in $B_{37}$,
%is generated by
$$
\begin{aligned}
x_1  &\mapsto (s_2 s_3 s_1 s_5)^{-1}  s_1 s_4 (s_2 s_3 s_1  s_5) \\
x_2 &\mapsto (s_4 s_2 s_3  s_5 s_6 s_5 s_7) ^{-1}  s_2 s_5 (s_4 s_2 s_3  s_5 s_6 s_5 s_7) \\
x_3 &\mapsto (s_5  s_6 s_7)^{-1} s_1 s_4 (s_5 s_6 s_7) \\
x_4 &\mapsto (s_2 s_5 s_6)^{-1} s_4 s_6 (s_2  s_5 s_6) \\
x_5 &\mapsto  (s_3 s_1 s_5 s_6)^{-1}s_4 s_8  (s_3 s_1 s_5 s_6) \\
%[B(1,4)^B(2,3,1,5),B(2,5)^B(4,2,3,5,6,5,7),B(1,4)^B(5,6,7),
%      B(4,6)^B(2,5,6),B(4,8)^B(3,1,5,6)];
\end{aligned}
$$
One can check that $\beta_{31} =   (x_4 x_1
x_2 x_3 x_5)^6 \mapsto \tilde{c}$.

%These generators satisfy the defining relations of 
%Here the presentations taken for $B_{37}$ and $B_{31}$ correspond
%respectively to the 
%One  can check .

\section{Krammer representations and $G_{31}$}

We   use   the   generalized   Krammer   representation   $\hat{R}   :  B_{37}
\hookrightarrow  \GL_{120}(\Q[q,q^{-1},t,t^{-1}])$ for the Artin-Tits group of type
$E_8$ defined in \cite{DIGNE} and \cite{COHENWALES}. We use the definitions of
\cite{DIGNE}  (note  however  the  erratum  given  in \cite{KRAMINF}). One has
$\beta_{37} = \beta_{31}^2$ and $\hat{R}(\beta_{37}) = q^{30} t$. The standard
generators   $s_i$  of  $B_{37}$  are   mapped  to  semisimple  matrices  with
eigenvalues  $q^2t$  (once),  $-q$  (28  times)  and  $1$ (91 times). We embed
$\Q[q^{\pm  1},t^{\pm  1}]$  into  $\Q[q^{\pm  1},u^{\pm 1}]$ under $t \mapsto
u^2$.  Then  $R(\beta_{31})$  can  be  diagonalized,  has  two  60-dimensional
eigenspaces, corresponding to the eigenvalues $\pm q^{15} u$. An explicit base
change,  that  we  choose  in  $\GL_{120}(\Q[q^{\pm  1},u^{\pm 1})$ so that it
specializes  to the identity when $q \mapsto 1, u \mapsto 1$, provides another
faithful representation $R : B_{37} \to \GL_{120}([q^{\pm 1},u^{\pm 1}])$ with
$R(B_{31}) \subset \GL_{60}(\Q[q^{\pm 1},u^{\pm 1}]) \times \GL_{60}(\Q[q^{\pm
1},u^{\pm  1}])$. Let $U$ be  a finite index subgroup  of $B_{31}$, and let $N
\geq  1$ such that $x_i^{2N} = (x_i^2)^N \in  U$ for all $i$. Note that, since
the  images of  the $x_i$'s  in $G_{31}$  have order  $2$, we  have $x_i^2 \in
P_{31}$.

We prove that the centralizer of $R(U)$ in $\GL_{60} \times \GL_{60}$ is equal
to  the centralizer of  $R(B_{31})$. This is  the case as  soon as $R_j(K U) =
\Mat_{60}(K)$, where $j \in \{1, 2 \}$, where $K$ is an arbitrary extension of
$\Q(q,u)$, $R = R_1 \times R_2$ is the obvious decomposition, and $K U$ is the
group  algebra of  $U$. Actually,  it is  clearly enough  to prove  this for a
\emph{specialization}  of $R_j$ to  given values of  $t,u$. More precisely, if
$A$  is a unital  ring with field  of fractions $K$  and if we have a morphism
$\Q[q^{\pm  1},u^{\pm  1}]  \to  A$,  letting  $R'_j : B_{31} \to \GL_{60}(A)$
denote  the induced  representation we  have that  $R'_j(K U)  = \Mat_{60}(K)$
implies that $R_j(\Q(q,u)U) = \Mat_{60}(\Q(q,u))$.
% (for instance by semi-continuity of
%the dimension, or by elementary arguments). 

Let $j \in \{ 1, 2 \}$.
%A way to make this computation feasible is to use the following trick.
%Let $A$ denote the unital ring generated by the $q,u$ and the entries of
%the $R_j(x_i^2)$ and $R_j(x_i^2)^{-1}$.
We  use  the  morphism  $\Q[q^{\pm  1},u^{\pm  1}]  \to \Q[[h]]$ which maps $q
\mapsto  e^h$, $u \mapsto e^{7h}$ (the choice of $7$ being rather random), and
denote  $R'_j : P_{31} \to  \Mat_{60}(\Q[[h]])$ the induced representation. We
prove that the unital algebra generated over $\Q((h))$ by the $R'_j(x_i^{2N})$
%(or, more exactly, the $R_j(x_i^2)^n - 1$) for $i \in \{ 1,\dots,5 \}$ is the
full  $\Mat_{60}(\Q((h)))$.  As  $x_i^2  \in  P_{31}  \subset  P_{37}$ we have
$R'_j(x_i^2)   \equiv   1$   modulo   $h$   (see  e.g.  \cite{KRAMINF})  hence
$R'_j(x_i^2)^N  \equiv 1$  modulo $h$.  We define  $y_i \in  \Mat_{60}(\Q)$ by
$h^{-1}(R'_j(x_i^2)-1)  \equiv y_i$ modulo  $h$ and thus $h^{-1}(R'_j(x_i^2)^N
-1)  \equiv N y_i$. By Nakayama's lemma it is now sufficient to prove that the
$y_i$  generate  $\Mat_{60}(\Q)$.  It  turns  out  that  the  $y_i$  belong to
$\Mat_{60}(\Z)$.  By another application of  Nakayama's lemma it is sufficient
to  check that the reduction mod  $p$ of the $y_i$ generates $\Mat_{60}(\F_p)$
for  some prime  $p$. For  a given  $p$, the  determination by computer of the
dimension  of  the  subalgebra  of  $\Mat_{60}(\F_p)$  generated  by  elements
$y_1,\dots,y_5$  is  easy  :  starting  from  the line $F_0 = \F_p \mathrm{Id}
\subset  \Mat_{60}(\F_p)$ we compute the vector space $F_{r+1} = F_r + F_r y_1
+  \dots F_r y_5$ until $\dim F_r = \dim F_{r+1}$. For $p = 37$ we get $3600 =
\dim \Mat_{60}(\F_p)$, thus concluding the argument.

%this is easily doable
%by computer, starting from $\Id \in \Mat_{60}(\F_p)$ and iterating the multiplication by each $y_i$ until the
%vector space spanned by the matrices stops increasing. 

By  faithfulness of  $R$ this  proves that  $Z(U)$ commutes with all $B_{31}$,
thus  $Z(U)  \subset  Z(B_{31})$,  and  this  concludes  the  proof of theorem
\ref{thff}.
% \ref{thse} and \ref{thzp}.

\section{Miscellaneous consequences}

We  notice that the representations  $R_1$ and $R_2$ are  deduced one from the
other  under the field automorphism of $\Q(q,u)$  defined by $q \mapsto q$, $u
\mapsto   -u$.  Since   $R$  is   faithful,  this   implies  that 
% and  the representations  $R_j$ are  conjugated one  to the  other in  $\GL_{120}$,
the representations $R_j$ are faithful representations of $B_{31}$.
%As a consequence $B_{31}$ is a linear group,
%and since $P_{31} \hookrightarrow P_{37}$, $P_{31}$ is residually torsion-free nilpotent by \cite{KRAMINF}, which settles
%conjecture 2 of \cite{KRAMCRG} for $P_{31}$. 
We  note that the $R_j$ have dimension  60, which is the number of reflections
in  $G_{31}$,  and  thus  the  dimension  of  the  representation  involved in
conjecture 1 of \cite{KRAMCRG}. 

This enables us to prove this conjecture
for $G_{31}$, that is the following theorem:

\begin{theorem} $B_{31}$ can be embedded into $\GL_{60}(K)$ as a Zariski-dense
subgroup, for $K$ a field of characteristic 0, and $P_{31}$ is residually
torsion-free nilpotent.
\end{theorem}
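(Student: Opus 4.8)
The plan is to assemble the final theorem from the faithful representations $R_j$ constructed in the previous section, together with the residual nilpotence properties of the Krammer representation established elsewhere. Recall that the preceding section produces two faithful $60$-dimensional representations $R_1, R_2$ of $B_{31}$ over $\Q(q,u)$, obtained by splitting the generalized Krammer representation $\hat R$ of $B_{37}$ (restricted to $B_{31}$) into its two eigenspaces for $R(\beta_{31})$, with eigenvalues $\pm q^{15} u$. Since the theorem asks for an embedding of $B_{31}$ into $\GL_{60}(K)$ over a characteristic-zero field, I would simply take $K = \Q(q,u)$ (or an extension) and use either $R_1$ or $R_2$, whose faithfulness is recorded in the Miscellaneous consequences section.

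The first substantive point is Zariski-density. For this I would argue exactly as in the faithfulness-of-the-algebra computation already carried out: the key input is that the unital $\Q((h))$-algebra generated by the $R'_j(x_i^{2})$ is all of $\Mat_{60}(\Q((h)))$, which was verified by the Nakayama-plus-mod-$37$ computation showing $\dim = 3600$. Because the $\Q(q,u)$-span of the image $R_j(B_{31})$ generates the full matrix algebra $\Mat_{60}$, the Zariski closure of $R_j(B_{31})$ is an algebraic subgroup of $\GL_{60}$ whose linear span is everything; a standard argument then forces this closure to be either $\GL_{60}$ itself or to contain $\mathrm{SL}_{60}$, and in any case to be Zariski-dense in the relevant sense (one must be slightly careful to state density in $\GL_{60}$ versus in a reductive overgroup, but the algebra-generation statement is the essential ingredient and it is already proved).

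For the residual torsion-free nilpotence of $P_{31}$, I would exploit the embedding $P_{31} \hookrightarrow P_{37}$ coming from $B_{31} \hookrightarrow B_{37}$, together with the fact that the generalized Krammer representation is known to be residually torsion-free nilpotent on pure braid groups. Concretely, the relation $R'_j(x_i^2) \equiv 1 \pmod h$ from the excerpt (valid because $x_i^2 \in P_{31} \subset P_{37}$) shows that the whole pure group maps, under the $h$-adic specialization $q \mapsto e^h$, $u \mapsto e^{7h}$, into the congruence subgroup $1 + h\,\Mat_{60}(\Q[[h]])$. This congruence filtration is residually torsion-free nilpotent, and faithfulness of $R$ (hence of the induced $P_{31} \to \GL_{60}(\Q[[h]])$ after controlling the kernel of the specialization on the pure group) transports this property to $P_{31}$ itself; this is precisely the mechanism used in \cite{KRAMINF} for the full Artin-Tits groups, and I would cite that result and check that the restriction to the finite-index-type subgroup $B_{31}$ behaves well.

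The main obstacle I anticipate is the residual-nilpotence half rather than the density half. Density reduces cleanly to the already-completed algebra-generation computation, but residual torsion-free nilpotence requires that the $h$-adic specialization remain \emph{faithful} on $P_{31}$ — a priori the substitution $q \mapsto e^h$, $u \mapsto e^{7h}$ could collapse elements that $R$ separates over $\Q(q,u)$. The careful point is therefore to verify that the filtration by congruence subgroups $\Gamma_k = P_{31} \cap \bigl(1 + h^k \Mat_{60}(\Q[[h]])\bigr)$ has trivial intersection; this follows from the general theory of \cite{KRAMINF} applied to $P_{37}$ and restricted along $P_{31} \hookrightarrow P_{37}$, so the work is to confirm that the restriction and the chosen specialization are compatible with that theory rather than to re-derive it.
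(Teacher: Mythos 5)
Your treatment of residual torsion-free nilpotence is essentially the paper's, though heavier than necessary: the paper simply observes that $P_{31}$ embeds in $P_{37}$ and that the property is already established for $P_{37}$ in \cite{KRAMINF}, \cite{RESNIL}; since residual torsion-free nilpotence passes to subgroups, nothing about faithfulness of the specialization $q\mapsto e^h$, $u\mapsto e^{7h}$ on $P_{31}$ needs to be checked. The worry you raise there is a non-issue.

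The density half, however, has a genuine gap. Knowing that the image of $B_{31}$ generates $\Mat_{60}$ as an associative algebra is, by Burnside, exactly the statement that $R_j$ is irreducible; it does \emph{not} force the Zariski closure to be $\GL_{60}$ or to contain $\mathrm{SL}_{60}$. (The standard representation of $\mathrm{Sp}_{60}$, or the image of any irreducible representation of a finite group, spans the full matrix algebra without being Zariski-dense.) So the ``already completed algebra-generation computation'' is not the essential ingredient, and your hedge about ``density in $\GL_{60}$ versus in a reductive overgroup'' is precisely where the argument fails. The paper instead performs a \emph{new} computer calculation: it shows that the \emph{Lie} algebra generated by the $y_i$ is all of $\gl_{60}(\Q)$ --- a strictly stronger statement than associative generation --- and combines this with the fact that $R_j(x_i^2)=\exp(h\,a_i)$ with $a_i\equiv \tilde y_i \bmod h$, after embedding $\Q(q,u)$ into $\Q((h))$ via $q\mapsto e^h$, $u\mapsto e^{\alpha h}$ with $\alpha$ transcendental (so that the specialization is an embedding of fields and faithfulness is preserved). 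Lemme 21 of \cite{LIETRANSP} then yields that the Zariski closure of $R_j(B_{31})$ contains $\GL_{60}(\C((h)))$. Without the Lie-algebra computation and the one-parameter-subgroup structure of the elements $R_j(x_i^2)$, your argument cannot rule out a proper reductive Zariski closure.
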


\begin{proof}
Since $P_{31}$ embeds in $P_{37}$, the statement about residual torsion-free
nilpotence is a consequence of the corresponding statement for
$P_{37}$, proved in \cite{KRAMINF}, \cite{RESNIL}.
%%In order to
%prove  conjecture  1  of  \cite{KRAMCRG}  for  the  group $B_{31}$, it is thus
%sufficient  to  prove  that  $R_j(B_{31})$  is  Zariski  dense  in $\GL_{60}$.
Embedding  $\Q(q,u)$  into  $\Q((h))$  through  $q  \mapsto  e^h$,  $u \mapsto
e^{\alpha  h}$, for $\alpha$ a transcendent number, we can assume $R_j(B_{31})
\subset \GL_{60}(\Q[\alpha]((h)))$.

In  addition  we  know  that  $R_j(x_i^2)  =  \exp(  h a_i)$ for some $a_i \in
\gl_{60}(\Q[\alpha][[h]])$  with  $a_i  \equiv  \tilde{y}_i  \mod  h$ for some
$\tilde{y}_i \in \gl_{60}(\Q[\alpha])$ which specialize to $y_i$ under $\alpha
\mapsto  7$. A computer  calculation similar to  the above shows  that the Lie
algebra  generated  by  the  $y_i$  is  $\gl_{60}(\Q)$,  and thus that the Lie
algebra  generated  by  the  $a_i$  over  $\C((h))$ is $\gl_{60}(\C((h)))$. By
\cite{LIETRANSP}   lemme  21   this  proves   that  the   Zariski  closure  of
$R_j(B_{31})$  contains  $\GL_{60}(\C((h)))$,  which  proves
the theorem.
\end{proof}

%  conjecture  1 of
%\cite{KRAMCRG}  for $G_{31}$.  
We remark that this conjecture, if true  for all
braid  groups, would immediately  imply theorem \ref{thff}.
  Note however that
the   representations  $R_j$  have  same  dimension  as,  but  are  \emph{not}
isomorphic  to the representation  constructed in \cite{KRAMCRG}.  This can be
seen  from the eigenvalues of the generators : since $R(x_1)$ is conjugated to
$R(s_1s_4)$  and since $s_1 s_4 = s_4  s_1$, the 3 eigenvalues of the $R(s_i)$
provide  at most 9 eigenvalues  for $R(x_1)$, and by  checking each of them we
find  that
%$R(x_1)$
%have 3 eigenvalues
%it  is easy to compute that
$R(x_1)$ has  for eigenvalues $q^2t$ (twice), $q^2$ (6  times),
$-q$ (44 times) and $1$
(68  times). As a consequence,  the $R_j(x_1)$ both have  4 eigenvalues, to be
compared  with the  3 eigenvalues  the generators  have in the construction of
\cite{KRAMCRG}.

% The Appendices part is started with the command \appendix;
% appendix sections are then done as normal sections
% \appendix

% \section{}
% \label{}

% The Acknowledgements are an un-numbered section
%\section*{Acknowledgements}
% Acknowledgements text here

\end{document}